\documentclass[12pt]{article}
 \usepackage{amsthm,amsmath,amssymb}
 \usepackage{color}
 
   \usepackage[all]{xy}
  \usepackage{graphicx}

\theoremstyle{plain}
\newtheorem{Thm}{Theorem}

\newtheorem{Def}[Thm]{Definition}

\newtheorem{remark}[Thm]{Remark}

\newtheorem{theorem}[equation]{Theorem}

\newcommand{\twotwo}{\mathbf{2+2}}
\newcommand{\threeone}{\mathbf{3+1}}
\newcommand{\fourone}{\mathbf{4+1}}
\newcommand{\none}{\mathbf{n+1}}

\date{August 30, 2017 }

\title{A Simple Proof  Characterizing Interval Orders with Interval Lengths between 1 and $k$}

\author{Simona Boyadzhiyska\thanks{This work was supported by a Jerome A. Schiff Fellowship at Wellesley College. }\\
\small Berlin Mathematical School \\
 \small Freie Universit\"{a}t Berlin \\
\small Berlin, Germany \\
\small\tt \  s.boyadzhiyska@fu-berlin.de
\and
Garth Isaak\\
\small Department of Mathematics\\
\small Lehigh University\\
\small Bethlehem, PA  18015\\
\small\tt  gi02@lehigh.edu 
\and
Ann  N. Trenk\thanks{
This work was supported by a grant from the Simons Foundation (\#426725, Ann Trenk). } \\
\small Department of Mathematics\\
\small Wellesley College\\
\small Wellesley, MA 02481\\
\small\tt atrenk@wellesley.edu
}

%\date{\dateline{March 9, 2012}{March 10, 2012}\\
%\small Mathematics Subject Classifications:  }

\begin{document}

\maketitle

\begin{abstract} A poset $P= (X, \prec)$ has an interval representation if each $x \in X$ can be assigned a real interval $I_x$ so that $x \prec y$ in $P$ if and only if $I_x$ lies completely to the left of $I_y$.  Such orders are called \emph{interval orders}.  Fishburn  \cite{Fi83,Fi85}
proved that for any positive integer $k$, an interval order has a representation in which all interval lengths are between $1$ and $k$ if and only if the order does  not contain $\mathbf{(k+2)+1}$  as an induced poset.  In this paper, we give a simple proof of this result using a digraph model.

\end{abstract}

\bibliographystyle{plain} 

 \bigskip\noindent \textbf{Keywords:  Interval order, interval graph, semiorder }
\section{Introduction}

\subsection{Posets and Interval Orders}
A poset $P$ consists of a set $X$ of \emph{points} and a relation $\prec$ that is irreflexive and transitive, and therefore antisymmetric.  It is sometimes convenient to write $y \succ x$ instead of $x \prec y$.    If $x \prec y$ or $y \prec x$, we say that $x$ and $y$ are \emph{comparable}, and otherwise we say they are \emph{incomparable}, and denote the incomparability by $x \parallel y$.    An \emph{interval representation} of a poset  $P=(X,\prec)$ is an assignment of a closed real interval $I_v$ to each $v\in X$ so that 
$x \prec y$ if and only if   $I_x$   is completely to the left of $I_y$.     A poset with such a representation is called an \emph{interval order}.  It is well-known that  the classes studied in this paper are the same if open intervals are used instead of closed intervals, e.g., see Lemma 1.5 in \cite{GoTr04}.

The poset $\twotwo$ shown in  Figure~\ref{chains-fig}  consists of four elements $\{a,b,x,y\}$ and the only comparabilities are $a \prec x$ and $b \prec y$. The following elegant theorem characterizing interval orders   was anticipated by Wiener in 1914 (see \cite{FiMo92}) and shown by Fishburn \cite{Fi70}:
Poset $P$ is an interval order if and only if it contains no induced $\twotwo$.
    Posets  that have an interval representation in which all intervals are the same length are known as  \emph{unit interval orders} or \emph{semiorders}.    Scott and Suppes \cite{ScSu58} characterize unit interval orders as those posets with no induced $\twotwo$ and no induced $\threeone$.  Figure~\ref{chains-fig} shows the posets $\twotwo$, \  $\threeone$, and $\fourone$.  More generally, the poset $\none$ consists of a chain of $n$ distinct elements $a_1 \prec a_2 \prec \cdots \prec a_n$ and an additional element that is incomparable to each $a_i$.

\begin{figure}
\begin{center}
% \label{fig-forbid}
 \begin{picture}(300,65)(0,15)
\thicklines

\put(20,40){\circle*{5}}
\put(20,70){\circle*{5}}

\put(50,40){\circle*{5}}
\put(50,70){\circle*{5}} 
\put(20,40){\line(0,1){30}}
\put(50,40){\line(0,1){30}}

\put(7,38){$a$}
\put(7,68){$x$}

\put(55,38){$b$}
\put(55,68){$y$}

\put(22,10){$\twotwo$}

\put(105,10){$\threeone$}

\put(196,10){$\fourone$}

\put(120,35){\circle*{5}}
\put(120,55){\circle*{5}}
\put(120,75){\circle*{5}}
\put(140,55){\circle*{5}}
\put(120,35){\line(0,1){40}}

\put(107,33){$a$}
\put(107,53){$b$}
\put(107,73){$c$}
\put(145,53){$x$}
  
  \put(210,30){\circle*{5}}
\put(210,50){\circle*{5}}
\put(210,70){\circle*{5}}
\put(210,90){\circle*{5}}
\put(230,60){\circle*{5}}
\put(210,30){\line(0,1){60}}

\put(197,28){$a$}
\put(197,48){$b$}
\put(197,68){$c$}
\put(197,88){$d$}
\put(235,58){$x$}

  \end{picture}
  \end{center}

\caption{The posets $\twotwo$, \  $\threeone$, and $\fourone$.}

\label{chains-fig}
 \end{figure}

In this paper, we consider an intermediate class between the extremes of   interval orders (no restrictions on interval lengths) and unit interval orders (all intervals the same length).    In particular, we allow interval lengths to range from 1 to $k$, where $k$ is a positive integer.  Fishburn \cite{Fi83} characterizes this class as those posets with no induced $\twotwo$ and no induced $\mathbf{(k+2)+1}$, generalizing the result of Scott and Suppes.   In fact, Fishburn characterizes those posets that have an  interval representation by intervals whose lengths are between $m$ and $n$ for any relatively prime integers $m,n$ in terms of  what he calls \emph{picycles}. The proof is technical, and it does not immediately yield a forbidden poset characterization in the general case.
We use a digraph model from Isaak~\cite{Is09} to give a shorter and more accessible proof in the case $m=1, n=k$. Our digraph model and the equivalence of statements (1) and   (3) in Theorem~\ref{lengths1tok} can easily be extended to general $m,n$.  It is also natural to consider allowing the interval lengths to vary between 1 and any real value. Fishburn and Graham \cite{FiGr85} study the classes $C(\alpha)$ of interval graphs that have  a representation  by intervals with lengths between $1$ and $\alpha$ for any real $\alpha\geq 1$, showing that the points where $C(\alpha)$ expands are the rational values of $\alpha$.  
The problem of characterizing posets that have an interval representation in which the possible interval lengths come from a discrete set (rather than from an interval)  is more challenging, and we consider two variants of this question in \cite{BoIsTr17}. 

\subsection{Digraphs and Potentials}

A {\em directed graph}, or {\em digraph}, is a pair $G=(V,E)$, where $V$ is a finite set of {\em vertices}, and $E$ is a set of ordered pairs $(x,y)$ with $x,y\in V$, called {\em arcs}.   A \emph{weighted digraph} is a digraph in which each arc $(x,y)$ is assigned a real number weight $w_{xy}$.  We sometimes denote the arc $(x,y)$ by $x\rightarrow y$, and in a weighted digraph, by $x \xrightarrow{w_{xy}}y$.  A \emph{potential function}  $p:V\rightarrow \mathbb{R}$, defined on the vertices of a weighted digraph, is a function satisfying $p(y) - p(x) \leq w_{xy}$ for each arc  $(x,y)$.  Theorem~\ref{potential-no-neg} is a well-known result that   specifies precisely which digraphs have potential functions.

   A \emph{cycle} in digraph $G$  is a subgraph  with vertex set  $\{ x_1, x_2, x_3,  \dots, x_t \}$ and arc set  $\{ (x_{i}, x_{i+1}): 1 \le i \le t-1\} \cup \{(x_t,x_1)\}$.
In a weighted digraph, the \emph{weight} of  cycle $C$, denoted by $wgt(C)$,  is the sum of the weights of the arcs
 of $C$.    A cycle with negative weight is called a \emph{negative cycle}.  The following theorem is well-known, see Chapter 8 of \cite{Sc03} for example, and we  provide a proof in \cite{BoIsTr17}.

\begin{theorem}
A weighted digraph has a potential function if and only if it contains no negative cycle.
\label{potential-no-neg}
\end{theorem}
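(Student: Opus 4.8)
The plan is to prove both directions directly, with the forward direction (potential function $\Rightarrow$ no negative cycle) being the easy one and the converse (no negative cycle $\Rightarrow$ potential function) carrying the real content. For the forward direction, suppose $p$ is a potential function and $C$ is any cycle with vertices $x_1, x_2, \dots, x_t$ and arcs $(x_i, x_{i+1})$ (indices mod $t$). Summing the defining inequality $p(x_{i+1}) - p(x_i) \le w_{x_i x_{i+1}}$ over all arcs of $C$, the left-hand side telescopes to $0$, so $0 \le wgt(C)$; hence $C$ is not negative. This handles one implication in a couple of lines.

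For the converse, I would assume $G = (V,E)$ has no negative cycle and construct a potential function explicitly via shortest paths. The standard device is to add a new vertex $s$ with a zero-weight arc $s \to v$ for every $v \in V$; this does not create any new cycle (since $s$ has in-degree $0$), so the augmented digraph still has no negative cycle, and now every vertex is reachable from $s$. Define $p(v)$ to be the minimum weight of a walk from $s$ to $v$. I need to argue this minimum is well-defined: there are finitely many paths (walks without repeated vertices), and any walk can be decomposed into a path plus some cycles, each of which has nonnegative weight, so the infimum over walks is attained by a path and is finite. Then for any arc $(x,y) \in E$, a minimum-weight walk to $x$ followed by the arc $(x,y)$ is a walk to $y$, so $p(y) \le p(x) + w_{xy}$, i.e., $p(y) - p(x) \le w_{xy}$, which is exactly the potential condition. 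Restricting $p$ to $V$ gives the desired function on $G$.

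The main obstacle — really the only subtle point — is justifying that the shortest-walk value $p(v)$ is well-defined and finite, which is precisely where the no-negative-cycle hypothesis is used: without it, walks could have arbitrarily small weight by traversing a negative cycle repeatedly. I would make this rigorous by the walk-decomposition argument: induct on the length of a walk $W$ from $s$ to $v$; if $W$ has a repeated vertex, it contains a cycle $C$ with $wgt(C) \ge 0$, and deleting $C$ yields a shorter walk from $s$ to $v$ of weight at most $wgt(W)$, so it suffices to minimize over the finitely many paths from $s$ to $v$. Everything else is routine. (Since the paper defers the full proof to \cite{BoIsTr17} and only needs the statement, I would keep this brief and cite Chapter~8 of \cite{Sc03} for the details.)
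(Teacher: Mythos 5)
Your proof is correct and follows the standard shortest-walk construction that the paper itself relies on (it defers the details to \cite{BoIsTr17} and \cite{Sc03}, but the closing algorithmic discussion explicitly takes $p(v)$ to be the minimum weight of a walk ending at $v$, exactly your construction up to the harmless auxiliary source vertex). The telescoping argument for the forward direction and the cycle-deletion argument for well-definedness are both sound.
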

 \section{Orders with a $[1,k]$-Interval Representation}

We say that poset $P$ has an $[a,b]$-interval representation if it has a representation by intervals whose lengths are between $a$ and $b$ (inclusive).  When $a=b>0$, the  posets with such a representation are the unit  interval orders.      Because representations can be scaled, for any $b>0$, all interval orders have a $[0,b]$-interval representation.  
 This motivates us to consider the lower bound $a=1$, and in particular, posets that have a $[1,k]$-interval representation where  $k$ is a positive integer.    Fishburn characterized this class in \cite{Fi83} by showing the equivalence of (1) and (2) in Theorem~\ref{lengths1tok};  however, the proof is quite technical.  
Using the framework in \cite{Is09}, we construct a weighted digraph $G_{P,k}$ associated with poset $P$ and show that $P$ has a $[1,k]$-interval representation      if and only if  $G_{P,k}$ has no negative cycle.   This allows for a more  accessible proof of Theorem~\ref{lengths1tok}.   We choose the value of $\epsilon$  appearing as a weight in $G_{P,k}$  so that $0 < \epsilon < \frac{1}{2|X|}$.

\begin{Def} {\rm
Let $P=(X,\prec)$ be a partial order. Define $G_{P,k}$ to be the weighted digraph with vertices $\{\ell_x,r_x\}_{x\in X}$ and the following arcs:
\begin{itemize}
\item $(\ell_y, r_x)$ with weight $-\epsilon$ for all $x,y\in X$ with $x\prec y$,
\item $(r_x,\ell_y)$ with weight $0$ for all $x,y\in X$ with $x||y$,
\item $(r_x,\ell_x)$ with weight $-1$ for all $x\in X$,
\item $(\ell_x,r_x)$ with weight $k$ for all $x\in X$.
\end{itemize}
}
\label{GPk-def}
\end{Def}

It is helpful to think of the arcs of $G_{P,k}$ as  coming in two categories:  $\ell \to r$ and $r \to \ell$.  We list the arcs by category for easy reference.

\begin{center}
\begin{tabular}{|c|c|c|c|}
\hline
Type & Arc & Weight & $x,y$ Relation \\ 
\hline
$\ell \to r$ & $(\ell_y,r_x)$  & $-\epsilon $ & $y \succ x$ \\
\hline
 & $(\ell_x,r_x)$  & $k $ &   \\
\hline
$r \to \ell$ & $(r_x,\ell_y)$  & $0 $ & $x \parallel y$ \\
\hline
 & $(r_x,\ell_x)$  & $-1 $ &   \\
\hline
\end{tabular}

\end{center}
 Any negative   cycle in $G_{P,k}$ with a minimum number of arcs will have at most $2|X|$ arcs since $G_{P,k}$ has $2|X|$ vertices.  Since $\epsilon$ satisfies $0 < \epsilon < \frac{1}{2|X|}$, the
  arcs of weight $- \epsilon$ will have combined weight $w$, where $-1 < w \le 0$.  We record a consequence of this observation in the following remark.

\begin{remark} {\rm
If $C$ is a negative weight cycle in $G_{P,k}$ containing the minimum number of arcs, then $C$ contains at least $k$ arcs of weight $-1$ for every arc of weight $k$.  }
\label{remark-k}
\end{remark}

 \begin{theorem} 
Let $P=(X,\prec)$ be a partial order and let $k\in \mathbb{Z}_{\geq 1}$. The following are equivalent:

\begin{enumerate}
    \item $P$ has a $[1,k]$-interval representation.
    \item $P$ contains no induced $\mathbf{2+2}$ or $\mathbf{(k+2)+1}$.
    \item The weighted digraph $G_{P,k}$ contains no negative cycle.
\end{enumerate}
\label{lengths1tok}
\end{theorem}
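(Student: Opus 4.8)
The plan is to prove the cycle of implications $(1) \Rightarrow (2) \Rightarrow (3) \Rightarrow (1)$, with the digraph $G_{P,k}$ doing the heavy lifting in the last step via Theorem~\ref{potential-no-neg}.

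\textbf{$(1) \Rightarrow (2)$.} This is the easy direction. Suppose $P$ has a $[1,k]$-interval representation. Since any interval representation forbids an induced $\twotwo$ (Fishburn's theorem), that half is immediate. For $\mathbf{(k+2)+1}$, suppose for contradiction that $P$ contains an induced copy with chain $a_1 \prec a_2 \prec \cdots \prec a_{k+2}$ and a point $x$ incomparable to all $a_i$. In the representation, $I_x$ overlaps each $I_{a_i}$. Because $a_1 \prec a_{k+2}$, the right endpoint of $I_{a_1}$ is strictly less than the left endpoint of $I_{a_{k+2}}$; but $I_x$ meets both, so the length of $I_x$ strictly exceeds the gap between $r_{a_1}$ and $\ell_{a_{k+2}}$, which in turn is at least the combined lengths of $I_{a_2}, \ldots, I_{a_{k+1}}$ (the $k$ intermediate intervals, each sitting in disjoint position between $I_{a_1}$ and $I_{a_{k+2}}$), hence at least $k$. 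This forces $|I_x| > k$, contradicting the upper bound. (One makes the "disjoint position" claim precise from transitivity of $\prec$ along the chain.)

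\textbf{$(3) \Rightarrow (1)$.} Assume $G_{P,k}$ has no negative cycle; by Theorem~\ref{potential-no-neg} it has a potential function $p$. Define $I_x = [p(\ell_x), p(r_x)]$. One must check four things: (i) the interval is well-formed, i.e. $p(\ell_x) \le p(r_x)$ — this follows from the arc $(r_x,\ell_x)$ of weight $-1$, giving $p(\ell_x) - p(r_x) \le -1$, so actually $|I_x| \ge 1$; (ii) the length is at most $k$ — from the arc $(\ell_x, r_x)$ of weight $k$, giving $p(r_x) - p(\ell_x) \le k$; (iii) if $x \prec y$ then $I_x$ is completely left of $I_y$, i.e. $p(r_x) < p(\ell_y)$ — the arc $(\ell_y, r_x)$ of weight $-\epsilon$ gives $p(r_x) - p(\ell_y) \le -\epsilon < 0$; (iv) if $x \parallel y$ then $I_x$ and $I_y$ overlap — the arcs $(r_x,\ell_y)$ and $(r_y,\ell_x)$ of weight $0$ give $p(\ell_y) \le p(r_x)$ and $p(\ell_x) \le p(r_y)$, which together with (i) say neither interval is strictly left of the other. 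Thus $I_x$ is-left-of $I_y$ exactly when $x \prec y$, so we have a valid $[1,k]$-interval representation.

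\textbf{$(2) \Rightarrow (3)$.} This is the crux and the step I expect to be the main obstacle. I argue the contrapositive: if $G_{P,k}$ has a negative cycle, then $P$ contains an induced $\twotwo$ or $\mathbf{(k+2)+1}$. Take a negative cycle $C$ with the minimum number of arcs. The structure of $G_{P,k}$ forces $C$ to alternate between $\ell \to r$ arcs and $r \to \ell$ arcs (every $\ell$-vertex has only outgoing $\ell \to r$ arcs and incoming $r \to \ell$ arcs, and symmetrically), so $C$ visits vertices $\ell_{x_1}, r_{x_1}'$-type pairs in alternation; more carefully, an $\ell \to r$ arc is either $(\ell_x, r_x)$ (weight $k$) or $(\ell_y, r_x)$ with $x \prec y$ (weight $-\epsilon$), and an $r \to \ell$ arc is either $(r_x,\ell_x)$ (weight $-1$) or $(r_x, \ell_y)$ with $x \parallel y$ (weight $0$). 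Using minimality, I want to rule out "degenerate" patterns: e.g., a $(\ell_x,r_x)$ arc immediately followed by $(r_x,\ell_x)$ would be a $2$-cycle of weight $k-1 \ge 0$, not negative; a $0$-weight arc followed by a $-1$ arc returning can be short-circuited; repeated vertices can be spliced out contradicting minimality. The combinatorial payoff: reading off the indices visited by $C$, the comparabilities forced by the $-\epsilon$ arcs ($x \prec y$) and the incomparabilities forced by the $0$ arcs ($x \parallel y$), I obtain a cyclic sequence of points of $P$. If $C$ uses no weight-$k$ arc, then $wgt(C) < 0$ together with Remark (the weight-$(-\epsilon)$ arcs contribute more than $-1$ combined) forces at least one $-1$ arc but the $0$ and $-\epsilon$ arcs alone with a single $-1$ arc already encode a $\twotwo$: two incomparable-type steps bracketing a $\prec$ and a single interval, i.e. $a \parallel b$, $b \prec c$ patterns closing up — more precisely one extracts points $a,b,c,d$ with $a\prec c$, $b \prec d$, and $a,b,c,d$ otherwise incomparable. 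If $C$ does use a weight-$k$ arc $(\ell_x,r_x)$, Remark~\ref{remark-k} gives at least $k$ arcs of weight $-1$ on $C$, say on intervals $x = y_0, y_1, \ldots$; the interleaving $0$-weight and $-\epsilon$-weight arcs then chain these into a relation showing the $y_i$ form a $\prec$-chain of length $k+2$ all incomparable to a common point, i.e. an induced $\mathbf{(k+2)+1}$ — after using the no-$\twotwo$ hypothesis to promote the "incomparable to each" conditions and transitivity to get the full chain. The delicate bookkeeping is exactly this translation from a minimum-length negative cycle to the combinatorial configuration, keeping track of which point each $\ell/r$ vertex belongs to and why minimality prevents shortcuts; I would organize it by first proving a normal-form lemma for minimum negative cycles (alternating types, no immediate back-tracking, each "block" between consecutive weight-$k$ or weight-$(-1)$ arcs has a controlled shape) and then doing the two cases above.
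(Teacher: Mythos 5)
Your overall architecture is sound, and two of your three implications are fine. Your $(3)\Rightarrow(1)$ is the same argument as the paper's. Your $(1)\Rightarrow(2)$ is genuinely different from the paper's route: the paper never proves this implication directly, instead going through $(1)\Rightarrow(3)$ (interval endpoints give a potential function) and $(3)\Rightarrow(2)$ (each forbidden poset yields an explicit negative cycle); your direct geometric argument --- the $k$ interior intervals of the chain each have length at least $1$ and sit pairwise disjointly, strictly between $R(a_1)$ and $L(a_{k+2})$, so any interval meeting both $I_{a_1}$ and $I_{a_{k+2}}$ has length greater than $k$ --- is correct and arguably more transparent, at the cost of not producing the negative-cycle certificates that the paper's certifying algorithm later exploits.

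The gap is in $(2)\Rightarrow(3)$, which you have sketched rather than proved, and the sketch misidentifies where the $\twotwo$ comes from. A weight $-1$ arc $(r_x,\ell_x)$ carries no incomparability information, so ``the $0$ and $-\epsilon$ arcs with a single $-1$ arc encode a $\twotwo$'' is not how it works: the $\twotwo$ arises from a segment of weights $-\epsilon,0,-\epsilon$, i.e.\ $\ell_a\to r_b\to\ell_c\to r_d$ giving $a\succ b$, $b\parallel c$, $c\succ d$, and even then only in the subcase $a\parallel d$; you must also dispose of $d\succ a$ (contradicts $b\parallel c$ by transitivity) and $a\succ d$ (shortcut the segment to a single $-\epsilon$ arc --- and to know the shorter cycle is still negative you first need $wgt(C)\le-2\epsilon$, which itself requires showing $C$ has at least two $-\epsilon$ arcs and using $\epsilon<\frac{1}{2|X|}$ so that all $-\epsilon$ arcs together contribute more than $-1$). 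Once ``no $(-\epsilon,0,-\epsilon)$ segment'' is in hand, the no-weight-$k$ case ends in a minimality contradiction (every return arc must then have weight $-1$, and a $-\epsilon,-1,-\epsilon$ segment gives $x_1\succ x_2=x_3\succ x_4$, which shortcuts), not in a $\twotwo$. In the weight-$k$ case your chain-extraction idea is right, but the bookkeeping you defer is exactly the content: you need $x_i\ne x_{i+2}$ (distinct cycle vertices) to force the arc after the weight-$k$ arc to have weight $0$ rather than $-1$, the no-$(-\epsilon,0,-\epsilon)$ claim to force the next $k$ return arcs to have weight $-1$, and a final trichotomy on the relation between $x_1$ and the far end of the chain (with the same shortcut-negativity estimate) to conclude that $x_1$ is incomparable to both ends of a $(k+2)$-chain and hence, by transitivity, to all of it. None of this is unreachable from your outline, but as written the crux of the theorem is asserted rather than proved.
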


\begin{proof}

\noindent $(1) \Rightarrow (3)$   Suppose that $P$ has an interval representation \mbox{$\mathcal{I} = \{I_x\}_{x\in X}$}, where $I_x = [L(x), R(x)]$,  and for each $x \in X$ we have $1 \le |I_x| \le k$.    Choose
 $\epsilon = \min\{\frac{1}{2|X| + 1},\delta \}$,  where  $\delta$ is   the smallest distance between unequal endpoints in the representation $\mathcal{I}$.  By the definition of an interval representation and the conditions on the interval lengths, we have

\begin{enumerate}
\item $R(x) - L(y) \leq -\epsilon$ for all $x,y\in X$ with $x\prec y$,
\item $L(y) - R(x) \leq 0$ for all $x,y\in X$ with $x||y$,
\item $L(x) - R(x)\leq -1$ for all $x\in X$,
\item $R(x) - L(x) \leq k$ for all $x\in X$.
\end{enumerate}

Now define  the function $p$ on the vertex set of $G_{P,k}$ as follows.  For each $x \in X$ let  
 $p(r_x) = R(x)$ and $p(\ell_x) = L(x)$.  So  $p$ satisfies
\begin{enumerate}
\item[(a)] $p(r_x) - p(l_y) \leq -\epsilon$ for all $x,y\in X$ with $x\prec y$,
\item[(b)] $p(l_y) - p(r_x) \leq 0$ for all $x,y\in X$ with $x||y$,
\item[(c)] $p(l_x) - p(r_x)\leq -1$ for all $x\in X$,
\item[(d)] $p(r_x) - p(l_x) \leq k$ for all $x\in X$.
\end{enumerate}
 
 Thus, for all $(u,v) \in E(G_{P,k})$, we have $p(v) - p(u)\leq w_{uv}$.  
Hence $p$ is a potential function on $G_{P,k}$ and by 
Theorem~\ref{potential-no-neg}, $G_{P,k}$ has no negative cycle.

\medskip

\noindent$(3) \Rightarrow (1)$ 
Given $G_{P,k}$ has no negative cycle, by Theorem~\ref{potential-no-neg}, there exists a potential function $p$ on $G_{P,k}$, and by definition, $p$ satisfies (a), (b), (c), (d).  For each $x \in X$, let $L(x) = p(\ell_x)$ and $R(x) = p(r_x)$.  By (c) we know $L(x) + 1 \le R(x)$, so $I_x = [L(x),R(x)]$ is indeed an interval with $|I_x| \ge 1$.  By   (d), the length of interval $I_x$ satisfies $  |I_x| \le k$, and by (a) and (b), $x \prec y$ in $P$ if and only if $R(x) < L(y)$.  Thus the set of intervals $\{I_x\}_{x \in X}$ forms a representation of $P$ in which each interval has length between 1 and $k$.

\medskip
\noindent$(3) \Rightarrow (2)$ 
If $P$ contains an induced $\mathbf{2+2}$, denoted by  $(x \succ a)||(y \succ b)$, then $\ell_x \xrightarrow{-\epsilon}r_a\xrightarrow{0}\ell_y\xrightarrow{-\epsilon}r_b\xrightarrow{0}\ell_x$ is a  cycle in $G_{P,k}$ with weight $-2\epsilon$. Similarly, if $P$ contains an induced $\mathbf{(k+2)+1}$, denoted by  $x \parallel  (a_{k+2}  \succ a_{k+1 }   \succ \cdots \succ a_2 \succ a_1) $, then $G_{P,k}$ contains the cycle

\noindent
%\centering
\scalebox{0.9}{\parbox{\linewidth}{ 
 $$r_x\xrightarrow{0}\ell_{a_{k+2}}\xrightarrow{-\epsilon}r_{a_{k+1}}\xrightarrow{-1}\ell_{a_{k+1}}\xrightarrow{-\epsilon}r_{a_k} \xrightarrow{-1}\ell_{a_k}  \xrightarrow{-\epsilon}
  \cdots    \xrightarrow{-\epsilon} r_{a_2}\xrightarrow{-1}\ell_{a_2}\xrightarrow{-\epsilon}r_{a_1}\xrightarrow{0}l_x\xrightarrow{k}r_x,$$
}}

\noindent whose weight is $(-1)k + k + (-\epsilon)(k+1) < 0.$  In either case, we obtain a negative cycle in $P$, a contradiction.

\medskip
\noindent $(2) \Rightarrow (3)$ 
Now assume $P$ contains no induced $\mathbf{2+2}$ or $\mathbf{(k+2)+1}$.
For a contradiction, assume that $G_{P,k}$ contains a negative cycle, and 
let $C$ be a negative   cycle  in $G_{P,k}$ containing the minimum number of arcs.  By definition of $G_{P,k}$, the arcs in $C$ must alternate between arcs of   type $\ell \rightarrow r$ and arcs of type $r \rightarrow \ell$, thus $C$ has the form  \mbox{$\ell_{x_1}\rightarrow r_{x_2} \rightarrow \ell_{x_3} \rightarrow \dots \rightarrow r_{x_n} \rightarrow \ell_{x_1}$} for some $x_1,x_2,\dots,x_n \in X$, not necessarily distinct.  Since no cycle in $G_{P,k}$ contains exactly two arcs, we know $n \ge 4$.  Furthermore, since vertices of a cycle are distinct, we know that $x_i \neq x_{i+2}$ for $1 \le i \le n$, where the indices are taken modulo $n$.

Next we show $wgt(C) \le -2\epsilon$.   Since $x_i \neq x_{i+2}$ for $1 \le i \le n$ (indices taken modulo $n$), the arcs  of $C$ immediately before and after a weight $k$ arc must have weight 0.  If $C$ has at most one arc of weight $-\epsilon$ , then the remaining $\ell \to r$ arcs have weight $k$, resulting in a positive weight for $C$, a contradiction.   Thus $C$ contains at least 
two arcs  of weight $-\epsilon$, and   Remark~\ref{remark-k} implies that  $wgt(C) \le -2\epsilon$.

We next claim that $C$ does not contain a segment of three consecutive arcs of weights $-\epsilon, 0, -\epsilon$.  For a contradiction, suppose $C$ contains the segment 
\mbox{$S_1:  \ell_{a}\xrightarrow{-\epsilon} r_{b} \xrightarrow{0}\ell_{c}  \xrightarrow{-\epsilon}r_{d}$}.  Then by the definition of $G_{P,k}$, we have $a \succ b$,  \ $b \parallel c$, \ and $c \succ d$.  If $d \succ a$, we get $c \succ d \succ a \succ b$, contradicting $ b \parallel c$.  If $a \parallel d$, then the elements $a,b,c,d$ induce in $P$ the poset 
$\twotwo$, a contradiction.  Otherwise,  $a \succ d$ and we can replace the segment  $S_1$ by 
\mbox{$\ell_{a}\xrightarrow{-\epsilon} r_{d}$} to yield a shorter cycle $C'$ with $wgt(C') = wgt(C) + \epsilon \le -2\epsilon + \epsilon = -\epsilon < 0$.  This contradicts the minimality of $C$.  

We now consider two cases depending on whether or not $C$ contains an arc of weight $k$.

\noindent
{\bf Case 1:  $C$ has no arc of weight $k$.}
In this case, $C$ alternates between arcs with weight $-\epsilon$ and arcs with weight in the set $\{0,-1\}$.    Since $C$ has at least four arcs and no segment of the form $(-\epsilon, 0, -\epsilon)$, there must be an arc of weight $-1$.  Without loss of generality, choose a starting point for $C$ so that it begins with the segment \mbox{$S_2: \ell_{x_1}\xrightarrow{-\epsilon}  r_{x_2} \xrightarrow{-1} \ell_{x_3} \xrightarrow{-\epsilon}  r_{x_4}.$}   By the definition of $G_{P,k}$ we have $x_1 \succ x_2 = x_3 \succ x_4$, so $x_1 \succ x_4$.  Replace  segment $S_2$ by \mbox{$\ell_{x_1}\xrightarrow{-\epsilon}   r_{x_4}$}  to obtain a  cycle $C'$ whose weight is also negative since it contains no arcs of weight $k$.  Since $C'$ has fewer arcs than $C$, this contradicts the minimality of $C$.

\noindent
{\bf Case 2:  $C$ contains an arc of weight $k$.}
By Remark~\ref{remark-k}, there is a segment of $C$ that starts with an arc of weight $k$  and has at least $k$ arcs of weight $-1$ before the next arc of weight $k$.  Without loss of generality, we can choose the starting point of $C$ so that it begins with the segment
\mbox{$\ell_{x_1}\xrightarrow{k}  r_{x_2} \xrightarrow{} \ell_{x_3} \xrightarrow{-\epsilon}  r_{x_4} \xrightarrow{}  \cdots \xrightarrow{-\epsilon}  r_{x_{2k}} \xrightarrow{}  \ell_{x_{2k+1}} $}.
If the arc $(r_{x_2},\ell_{x_3})$ has weight $-1$, then $x_1 = x_2 = x_3$,  a contradiction since $x_1 \neq x_3$.     Thus, the arc $(r_{x_2},\ell_{x_3})$  has weight 0 and $C$ begins with the segment \mbox{$\ell_{x_1}\xrightarrow{k}  r_{x_2} \xrightarrow{0} \ell_{x_3} \xrightarrow{-\epsilon}  r_{x_4} .$}  
 
 If any of the next $k$ arcs of the type $r \rightarrow \ell$ on $C$ had weight 0, then $C$ would contain a segment of the form $(-\epsilon, 0, -\epsilon)$, contradicting our earlier claim. 
 %If any of the arcs $(r_{x_{2t}}, \ell_{x_{2t+1}})$ had weight $0$ for $1 < t \le k+1$, then $C$ would contain a segment of the form $(-\epsilon, 0, -\epsilon)$, contradicting our earlier claim. 
  Thus each of these arcs has weight $-1$ and $C$ starts with the following segment:
 \mbox{$ \ell_{x_1}\xrightarrow{k}  r_{x_2} \xrightarrow{0} \ell_{x_3} \xrightarrow{-\epsilon}  r_{x_4} \xrightarrow{-1} \ell_{x_5}  \xrightarrow{-\epsilon}  r_{x_6} \xrightarrow{-1} \cdots  \xrightarrow{-\epsilon}  r_{x_{2k+2}} \xrightarrow{-1} \ell_{x_{2k+3}}.$}  
 
 \smallskip
 By the definition of $G_{P,k}$, we have the following relations in $P$:
 
 \noindent
 $x_1 = x_2 \parallel x_3 \succ x_4 = x_5 \succ x_6 = x_7 \succ  \cdots  = x_{2k+1} \succ x_{2k+2} = x_{2k+3}$.

     If $x_1 = x_{2k+3}$,  then by transitivity, $x_1 \prec x_3$, contradicting the relation $x_1 = x_2 \parallel x_3$.    Thus $C$ contains at least two more arcs  $(\ell_{x_{2k+3}} , r_{x_{2k+4}})$ and $(r_{x_{2k+4}}, \ell_{x_{2k+5}})$.  If arc $(\ell_{x_{2k+3}} , r_{x_{2k+4}})$ had weight $k$, then $x_{2k+2} = x_{2k+3} =x_{2k+4}$,   a contradiction since $x_{2k+2} \neq x_{2k+4}$.     Thus arc $(\ell_{x_{2k+3}} , r_{x_{2k+4}})$ has weight $- \epsilon$,  and $x_{2k+3} \succ x_{2k+4}$ in $P$, and $C$ starts with the following segment: 
    
    \noindent 
    \scalebox{0.95}{\parbox{\linewidth}{ 
     $$S:  \ell_{x_1}\xrightarrow{k}  r_{x_2} \xrightarrow{0} \ell_{x_3} \xrightarrow{-\epsilon}  r_{x_4} \xrightarrow{-1} \ell_{x_5}  \xrightarrow{-\epsilon}  r_{x_6} \xrightarrow{-1} \cdots  \xrightarrow{-\epsilon}  r_{x_{2k+2}} \xrightarrow{-1} \ell_{x_{2k+3}} \xrightarrow{-\epsilon}  r_{x_{2k+4}} .$$ }  }
     
    Finally, we consider the relation between $x_1$ and $x_{2k+4}$ in $P$.   If  $x_1 \prec x_{2k+4}$, then by transitivity, $x_1 \prec x_3$, a contradiction.    If $x_1 \succ x_{2k+4}$, we can replace segment $S$ by $\ell_{x_1}\xrightarrow{-\epsilon}  r_{x_{2k+4}} $ to obtain a shorter cycle $C'$ in $G_{P,k}$.  As noted earlier, the combined weight of the arcs of $C$ that have  weight $- \epsilon$   is strictly greater than $-1$, so $C'$ also has negative weight, contradicting the minimality of $C$.   Hence $x_1 \parallel x_{2k+4}$ and 
 the  $k+3$ elements in the set  $\{x_1, x_3, x_5, \ldots, x_{2k+3}, x_{2k+4}\}$ induce a  $\mathbf{(k+2) + 1}$ in $P$, a contradiction.
\end{proof}

We end by  describing an algorithm that   constructs a $[1,k]$-interval representation of a poset $P$ if one exists   and otherwise produces a forbidden poset, either $\twotwo$ or $\mathbf{(k+2) + 1}$.    Use a standard shortest-paths algorithm such as the Bellman-Ford or the matrix multiplication method on $G_{P,k}$ to compute the weight of a minimum-weight path between each pair of vertices or detect a negative cycle. 
If there is a negative cycle, these algorithms detect one with a minimum number of arcs.  If such a negative cycle exists in $G_{P,k}$, then as in the proof of $(2) \Rightarrow (3)$ of Theorem~\ref{lengths1tok}, either the cycle contains the segment $-\epsilon, 0, -\epsilon$, and a $\twotwo $ is detected in $P$, or else as in Case 2 of that proof, a $\mathbf{(k+2) + 1}$ is detected in $P$.
If there is no negative cycle, 
Theorem~\ref{potential-no-neg} ensures that a potential function  $p$ exists for $G_{P,k}$.  Indeed, setting $p(v)$ to be the minimum weight of a walk ending at $v$ produces a potential function.  As we showed in the proof of $(3) \Rightarrow (1)$, the intervals 
$[p(\ell_x),p(r_x)]$ provide   a $[1,k]$-interval representation of $P$. 
Thus there is a polynomial-time certifying algorithm.

\end{document}